\newcommand{\Mod}[1]{ \left(\mathrm{mod}\ #1\right)}
\newtheorem{theorem}{Theorem}[section]
\newtheorem{remark}{Remark}[section]
\newtheorem{corollary}{Corollary}[section]
\newcommand{\n}{\noindent}
\author{Ratan Lal and Vipul Kakkar \vspace{.2cm}\\  Department of Mathematics, Central University of Rajasthan,\\ Ajmer, India-305817.\\ \textit{Email:} \href{vermarattan789@gmail.com}{vermarattan789@gmail.com}, \href{vplkakkar@gmail.com}{vplkakkar@gmail.com} }
\title{Central Automorphisms of Zappa-Sz\'{e}p Products}
\begin{document}
	\maketitle
	\begin{abstract}
		In this paper, the central automorphism group of the Zappa-Sz\'{e}p product of two groups is obtained. 
	\end{abstract}

\textbf{Keywords:} {Automorphism Group, Central automorphism group, Zappa-Sz\'{e}p Product.}

\textbf{Mathematics Subject Classification (2010):} {20D45}

	\section{Introduction}
		Let $G$ be a group and $Aut(G)$ be the group of automorphisms of $G$. An automorphism $\theta\in Aut(G)$ is called a central automorphism of $G$ if it commutes with every inner automorphism of group $G$ or equivalently $g^{-1}\theta(g)\in Z(G)$ for all $g\in G$, where $Z(G)$ denotes the center of the group $G$. Central automorphisms are important to investigate the whole automorphism group $Aut(G)$. In fact, the set $Aut_{c}(G)$ of all central automorphisms of $G$ is a normal subgroup of $Aut(G)$. It is obvious that if we have an abelian automorphism group, then it coincides with the central automorphism group. The automorphism group of a $p$-group (where $p$ is a prime) coinciding with the central automorphism group are studied in \cite{cur82, gl86} and \cite{mal}. The study of central automorphisms of a group has been an interest to the algebraists (see \cite{cur82,mc01, gl86,jaf1, jaf2, jam, mal}).

	Let $H$ and $K$ be two subgroups of a group $G$. Then, $G$ is called the internal Zappa-Sz\'{e}p product of $H$ and $K$ if $G=HK$ and $H\cap K = \{1\}$. The Zappa-Sz\'{e}p product is a natural generalization of the semidirect product of two groups. If $G$ is the internal Zappa-Sz\'{e}p product of $H$ and $K$, then $K$ appears as a right transversal to $H$ in $G$. Let $h\in H$ and $k\in K$. Then $kh=\sigma(k,h)\tau(k,h)$, where $\sigma(k,h)\in H$ and $\tau(k,h)\in K$. This determines the maps $\sigma: K \times H \rightarrow H$ and $\tau: K\times H \rightarrow K$ defined by $\sigma(k,h) = \sigma_{k}(h)$ and $\tau(k,h) = \tau_{h}(k)$, for all $h\in H$ and $k\in K$ respectively. These maps are called the matched pair of groups and satisfy the following conditions (see \cite{af})
	
	\begin{itemize}
		\item[$(C1)$] $\sigma_{1}(h) = h$ and $\tau_{1}(k) = k$,
		\item[($C2$)] $\sigma_{k}(1) = 1 = \tau_{h}(1)$,
		\item[$(C3)$] $\sigma_{kk^{\prime}}(h) = \sigma_{k}(\sigma_{k^{\prime}}(h))$,
		\item[($C4$)] $\tau_{h}(kk^{\prime}) = \tau_{\sigma_{k^{\prime}}(h)}(k)\tau_{h}(k^{\prime})$,
		\item[($C5$)] $\sigma_{k}(hh^{\prime}) = \sigma_{k}(h)\sigma_{\tau_{h}(k)}(h^{\prime})$,
		\item[$(C6)$] $\tau_{hh^{\prime}}(k) = \tau_{h^{\prime}}(\tau_{h}(k)$,
	\end{itemize} 
	for all $h,h^{\prime} \in H$ and $k,k^{\prime}\in K$. 
	
	 Now, let $H$ and $K$ be two groups, $\sigma: K \times H \rightarrow H$ and $\tau: K \times H \rightarrow K$ be two maps which satisfy the above conditions. Then, the set $H\times K$ with the binary operation defined by 
	\begin{equation*}
	(h,k)(h^{\prime},k^{\prime}) = (h\sigma_{k}(h^{\prime}),\tau_{h^{\prime}}(k)k^{\prime})
	\end{equation*}
	forms a group called the external Zappa-Sz\'{e}p product of $H$ and $K$. The internal Zappa-Sz\'{e}p product is isomorphic to the external Zappa-Sz\'{e}p product (see \cite[Proposition 2.4, p. 4]{af}). We will identify the external Zappa-Sz\'{e}p product with the internal Zappa-Sz\'{e}p product. 
	
	G. Zappa \cite{gz} introduced the Zappa-Sz\'{e}p product of two groups which was also studied by J. Sz\'{e}p in the series of papers (few of them are \cite{sz2,sz3,sz1,sz4}). Finite simple groups are studied as a Zappa-Sz\'{e}p product of two groups with coprime orders \cite{za}. Also, it was observed that a finite group is solvable if and only if it is a Zappa-Sz\'{e}p product of a Sylow $p$-subgroup and a Sylow $p$-complement \cite{ph}.
	
	H. Mousavi and A. Shomali\cite{censd} have studied the central automorphisms of the semidirect product of groups. Extending this work, in this paper, we have found the structure of the central automorphism group of the Zappa-Sz\'{e}p product of two groups. As an application, we have computed the central automorphism group of a group of order $p^{5}$ which is the Zappa-Sz\'{e}p product of two abelian groups of orders $p^{2}$ and $p^{3}$, where $p$ is an odd prime. 
	
	Throughout the paper, $\mathbb{Z}_{n}$ denotes the cyclic group of order $n$. Let $U$ and $V$ be groups. Then $Hom(U,V)$ denotes the group of all group homomorphisms from $U$ to $V$. The group of all the surjective group homomorphisms of a group $G$ is denoted by $Epi(G)$ and $i_{g}$ defined by $x\mapsto gxg^{-1}$, for all $g\in G$ denotes the inner automorphism of a group $G$. $M_{r\times s}(\mathbb{Z}_{n})$ denotes the set of all matrices of order $r\times s$ with entries in $\mathbb{Z}_{n}$.  
	
	\section{Structure of the central automorphism group}
	\n  Let $G$ be the Zappa-Sz\'{e}p product of two groups $H$ and $K$. Let $U, V$ and $W$ be any groups. Then $Map(U, V)$ denotes the set of all maps between the groups $U$ and $V$. If $\phi, \psi \in Map(U, V)$ and $\eta \in Map(V, W)$, then $\phi + \psi \in Map (U,V)$ is defined by $(\phi + \psi)(u) = \phi(u)\psi(u)$, $\eta\phi \in Map(U,W)$ is defined by $\eta\phi(u) = \eta(\phi(u))$, $\sigma_{\phi}(\psi) \in Map(U,V)$ is defined by $(\sigma_{\phi}(\psi))(u) = \sigma_{\phi(u)}(\psi(u))$ and $\tau_{\phi}(\psi) \in Map(U, V)$ is defined by $(\tau_{\phi}(\psi))(u) = \tau_{\phi(u)}(\psi(u))$, for all $u\in U$. Let $\ker(\sigma) = \{k\in K \mid \sigma_{k}(h) = h \; \text{for all} \; h\in H\}$ and $Fix(\sigma) = \{h\in H\mid \sigma_{k}(h) = h, \; \text{for all} \; k\in K\}$. Similarly, we define the sets $\ker(\tau)$ and $Fix(\tau)$. Now, we find the center of the group $G$. 
	 
	 \begin{theorem}\label{zpcn}
	 	Let $G$ be the Zappa-Sz\'{e}p product of two groups $H$ and $K$. Then $Z(G) = \{(x,y)\in H\times K \mid x\in Fix(\sigma), y\in Fix(\tau), \sigma_{y} = i_{x^{-1}}, \tau_{x} = i_{y}\}$.
	 \end{theorem}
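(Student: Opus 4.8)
The plan is to exploit the factorization $(h,k) = (h,1)(1,k)$ afforded by $G = HK$: under the identifications $h \mapsto (h,1)$ and $k \mapsto (1,k)$, the subgroups $H$ and $K$ generate $G$, so an element $(x,y)$ lies in $Z(G)$ if and only if it commutes with every $(h,1)$ and every $(1,k)$. The four conditions in the statement will emerge as exactly the componentwise equalities forced by these two families of commutators, and because each of those equalities is an ``if and only if'', both inclusions will be obtained at once.

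First I would record the elementary consequences of (C1) and (C2) that get used repeatedly, namely $\sigma_1 = \mathrm{id}_H$, $\tau_1 = \mathrm{id}_K$, and $\sigma_k(1) = 1 = \tau_h(1)$. Applying the product rule $(h,k)(h',k') = (h\sigma_k(h'),\tau_{h'}(k)k')$, I would compute $(x,y)(h,1)$ and $(h,1)(x,y)$; after collapsing the trivial-slot terms via (C1)--(C2) these become $(x\sigma_y(h),\tau_h(y))$ and $(hx,y)$, respectively. Equating the two components shows that $(x,y)$ commutes with all of $H$ precisely when $\sigma_y(h) = x^{-1}hx$ for every $h \in H$, i.e. $\sigma_y = i_{x^{-1}}$, and $\tau_h(y) = y$ for every $h \in H$, i.e. $y \in Fix(\tau)$.

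Symmetrically, I would compute $(x,y)(1,k) = (x,yk)$ and $(1,k)(x,y) = (\sigma_k(x),\tau_x(k)y)$, again using (C1)--(C2) to discard the trivial slots. Equating components shows that $(x,y)$ commutes with all of $K$ precisely when $\sigma_k(x) = x$ for every $k \in K$, i.e. $x \in Fix(\sigma)$, and $\tau_x(k) = yky^{-1}$ for every $k \in K$, i.e. $\tau_x = i_y$. Since $H$ and $K$ generate $G$, centrality of $(x,y)$ is equivalent to the conjunction of these four conditions, which is exactly the asserted description of $Z(G)$.

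As for difficulty, there is no genuine obstacle beyond disciplined bookkeeping. The only points needing care are (i) simplifying the ``trivial-slot'' expressions $\sigma_1$, $\tau_1$, $\sigma_k(1)$, $\tau_h(1)$ correctly before comparing components, and (ii) noting that each of the four component equations is reversible, so that $Z(G) \subseteq \{\dots\}$ and $\{\dots\} \subseteq Z(G)$ follow simultaneously rather than requiring two separate passes. In particular the higher compatibility identities (C3)--(C6) play no role in this theorem.
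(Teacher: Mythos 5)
Your proposal is correct and follows essentially the same route as the paper: both proofs test centrality of $(x,y)$ against the elements $(h,1)$ and $(1,k)$, use (C1)--(C2) to simplify, and read off the four conditions from the componentwise equations. The only (harmless) difference is in the converse inclusion, which the paper establishes by an explicit computation showing $(x,y)(h,k) = (h,k)(x,y)$ for a general element of $G$, while you instead appeal to the standard fact that an element commuting with the generating set $\{(h,1) \mid h \in H\} \cup \{(1,k) \mid k \in K\}$ of $G = HK$ must lie in the center.
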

	 \begin{proof}
	 	Let $\Gamma = \{(x,y)\in H\times K \mid x\in Fix(\sigma), y\in Fix(\tau), \sigma_{y} = i_{x^{-1}}, \tau_{x} = i_{y}\}$. Let $(x,y)\in Z(G)$. Then for all $h\in H$, we have $(x,y)(h,1) = (h,1)(x,y)$. Therefore, $(x\sigma_{y}(h), \tau_{h}(y)) = (hx, y)$. Thus, we have
	 	\begin{equation}\label{s1e1}
	 	x\sigma_{y}(h) = hx \;\text{and}\; y\in Fix(\tau).
	 	\end{equation}
	 	Using the similar argument, for $k\in K$, we get 
	 	\begin{equation}\label{s1e2}
	 	\tau_{x}(k)y = yk \;\text{and}\; x\in Fix(\sigma).
	 	\end{equation} 
	 	Using the Equation \ref{s1e1}, we get $\sigma_{y}(h) =x^{-1}hx = i_{x^{-1}}(h)$, for all $h\in H$. Thus $\sigma_{y} = i_{x^{-1}}$. Using the similar argument and the Equation \ref{s1e2}, we get $\tau_{x} = i_{y}$. Hence, $(x,y)\in \Gamma$.
	 	
	 	Conversely, let $(x,y)\in \Gamma$ and $(h,k)\in G$. Then $(x,y)(h,k) = (x\sigma_{y}(h), \tau_{h}(y)k) = (x i_{x^{-1}}(h), yk)= (hx, (yky^{-1})y) = (h\sigma_{k}(x), i_{y}(k)y) = (h\sigma_{k}(x), \tau_{x}(k)y) = (h,k)(x,y)$. Thus $(x,y)\in Z(G)$. Hence, $Z(G) = \Gamma$.
	 \end{proof}
	  
	  	Let us define the sets $H^{\ast} = Fix(\sigma)\cap \ker(\tau)\cap Z(H)$ and $K^{\ast} = Fix(\tau)\cap \ker(\sigma)\cap Z(K)$. 
	\begin{corollary}\label{s1p1}
		Let $G$ be the Zappa-Sz\'{e}p product of two abelian groups $H$ and $K$. Then $Z(G) = H^{\ast} \times K^{\ast}$.
	\end{corollary}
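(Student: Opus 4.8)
The plan is to deduce this directly from Theorem \ref{zpcn} by using commutativity to trivialize the two inner-automorphism conditions appearing there. First I would record what abelianness of $H$ and $K$ buys us. Since $H$ is abelian we have $Z(H)=H$ and every inner automorphism of $H$ is the identity, so $i_{x^{-1}}=\mathrm{id}_{H}$ for each $x\in H$; symmetrically $Z(K)=K$ and $i_{y}=\mathrm{id}_{K}$ for each $y\in K$. In particular the $Z(H)$ and $Z(K)$ factors in the definitions become redundant, so that $H^{\ast}=Fix(\sigma)\cap\ker(\tau)$ and $K^{\ast}=Fix(\tau)\cap\ker(\sigma)$.

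Next I would rewrite the four defining conditions of $Z(G)$ from Theorem \ref{zpcn}. The condition $\sigma_{y}=i_{x^{-1}}$, where $\sigma_{y}$ and $i_{x^{-1}}$ are both self-maps of $H$, collapses to $\sigma_{y}=\mathrm{id}_{H}$, i.e. $\sigma_{y}(h)=h$ for all $h\in H$, which is exactly the statement $y\in\ker(\sigma)$. Symmetrically, $\tau_{x}=i_{y}$ as self-maps of $K$ collapses to $\tau_{x}=\mathrm{id}_{K}$, i.e. $x\in\ker(\tau)$. The crucial observation is that after this reduction the conditions on $x$ and on $y$ decouple completely: combining these with the $Fix$ memberships already present in Theorem \ref{zpcn}, the pair $(x,y)$ lies in $Z(G)$ precisely when $x\in Fix(\sigma)\cap\ker(\tau)=H^{\ast}$ and $y\in Fix(\tau)\cap\ker(\sigma)=K^{\ast}$, with no surviving constraint linking $x$ to $y$.

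Finally I would upgrade this set equality $Z(G)=H^{\ast}\times K^{\ast}$ to a genuine direct-product decomposition by checking that the Zappa-Sz\'{e}p multiplication restricts to componentwise multiplication on these elements: for $x\in\ker(\tau)$ and $y\in\ker(\sigma)$ one has $\sigma_{y}(x')=x'$ and $\tau_{x'}(y)=y$, so $(x,y)(x',y')=(xx',yy')$. I do not expect a genuine obstacle here, since the argument is a routine specialization; the only point requiring care is to confirm that $i_{x^{-1}}$ and $i_{y}$ in Theorem \ref{zpcn} are to be read as inner automorphisms of $H$ and of $K$ respectively, so that abelianness indeed renders them trivial.
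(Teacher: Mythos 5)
Your proposal is correct and follows essentially the same route as the paper: specialize Theorem \ref{zpcn}, observe that abelianness of $H$ and $K$ makes the inner automorphisms $i_{x^{-1}}$ and $i_{y}$ trivial so that the conditions $\sigma_{y}=i_{x^{-1}}$ and $\tau_{x}=i_{y}$ become $y\in\ker(\sigma)$ and $x\in\ker(\tau)$, and conclude $Z(G)=H^{\ast}\times K^{\ast}$. Your final step, checking via the kernel conditions that the Zappa-Sz\'{e}p multiplication is componentwise on these pairs so that the equality is a genuine direct-product decomposition of groups, is a sound extra verification that the paper leaves implicit.
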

	\begin{proof}
		Let $(x,y)\in Z(G)$. Then using the Theorem \ref{zpcn}, $x\in Fix(\sigma), y\in Fix(\tau), \sigma_{y} = i_{x^{-1}}, \tau_{x} = i_{y}$, for all $x\in H$ and $y\in K$. Since $H$ and $K$ are abelian groups, $i_{x} = I_{H}$ and $i_{y} = I_{K}$, for all $x\in H$ and $y\in K$. Therefore,  $\sigma_{y}(h) = h$ and $\tau_{x}(k) = k$, for all $h\in H$ and $k\in K$. Thus $y\in \ker(\sigma)$ and $x\in \ker(\tau)$. Hence, $x\in Fix(\sigma)\cap \ker(\tau)$ and $y\in Fix(\tau) \cap \ker(\sigma)$ and the result holds.
			\end{proof}

\begin{corollary}
	Let $G = H\rtimes_{\phi} K$ be the semidirect product of groups $H$ and $K$ with the group homomorphism $\phi: K\longrightarrow Aut(H)$. Then $Z(G) = \{(x,y)\in H\times K \mid x\in Fix(\sigma), y\in Z(K), \phi_{y} = i_{x^{-1}}\}$.
\end{corollary}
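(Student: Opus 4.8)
The plan is to realize the semidirect product $G = H \rtimes_{\phi} K$ as a special case of the Zappa-Sz\'{e}p product and then apply Theorem \ref{zpcn} directly. First I would identify the matched-pair maps. Comparing the semidirect-product multiplication $(h,k)(h',k') = (h\phi_{k}(h'), kk')$ with the Zappa-Sz\'{e}p multiplication $(h,k)(h',k') = (h\sigma_{k}(h'), \tau_{h'}(k)k')$ shows that $\sigma_{k} = \phi_{k}$ for all $k \in K$, while $\tau_{h}(k) = k$ for all $h \in H$ and $k \in K$; that is, the action of $H$ on $K$ encoded by $\tau$ is trivial.

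Next I would feed this data into Theorem \ref{zpcn}, which characterizes $Z(G)$ by the four conditions $x \in Fix(\sigma)$, $y \in Fix(\tau)$, $\sigma_{y} = i_{x^{-1}}$, and $\tau_{x} = i_{y}$. Two of these simplify immediately because $\tau$ is trivial: since $\tau_{h}(k) = k$ for every $h$, every element of $K$ lies in $Fix(\tau)$, so the condition $y \in Fix(\tau)$ is vacuous; and the condition $\tau_{x} = i_{y}$ reads $k = yky^{-1}$ for all $k \in K$, which says precisely that $y \in Z(K)$. The remaining two conditions translate verbatim: $x \in Fix(\sigma)$ is kept as stated, and $\sigma_{y} = i_{x^{-1}}$ becomes $\phi_{y} = i_{x^{-1}}$ since $\sigma_{y} = \phi_{y}$.

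Assembling these, the set described by Theorem \ref{zpcn} becomes exactly $\{(x,y) \in H \times K \mid x \in Fix(\sigma),\ y \in Z(K),\ \phi_{y} = i_{x^{-1}}\}$, which is the claimed description of $Z(G)$. I do not expect any genuine obstacle here, as the result is a direct specialization; the only point requiring care is the correct reading of the $\tau$-conditions, in particular verifying that the triviality of $\tau$ collapses $\tau_{x} = i_{y}$ to the centrality condition $y \in Z(K)$ rather than to some weaker statement, and that it renders the constraint $y \in Fix(\tau)$ empty.
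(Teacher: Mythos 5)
Your proposal is correct and takes essentially the same route the paper intends: realize $G = H\rtimes_{\phi}K$ as the Zappa-Sz\'{e}p product with $\sigma = \phi$ and $\tau$ trivial, then read off the four conditions of Theorem \ref{zpcn}, noting that triviality of $\tau$ makes $y\in Fix(\tau)$ vacuous and collapses $\tau_{x} = i_{y}$ to $y\in Z(K)$. In fact your write-up is more careful than the paper's one-line proof, which cites Corollary \ref{s1p1} (the abelian case, where $Z(G) = H^{\ast}\times K^{\ast}$) even though $H$ and $K$ need not be abelian here; the correct reference is Theorem \ref{zpcn}, exactly as you use it.
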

\begin{proof}
	 By taking $\sigma = \phi$ and $\tau$ the trivial action of $H$ on $K$ in the Corollary \ref{s1p1}, the proof follows immediately. 
\end{proof}
	
	\n Let $\mathcal{A}_{c}$ be the set of all matrices of the form $\begin{pmatrix}
	\alpha & \beta\\
	\gamma & \delta
	\end{pmatrix}$, where $\alpha \in Map(H,H)$, $\beta \in Hom(K,H), \gamma \in Hom(H,K)$ and $\delta \in Map(K, K)$ satisfy the following conditions,
	\begin{itemize}
		\item[$(A1)$] $\alpha(hh^{\prime}) = \alpha(h)\sigma_{\gamma(h)}(\alpha(h^{\prime}))$,
		\item[$(A2)$] $h^{-1}\alpha(h)\in Fix(\sigma)$ and $\gamma(h)\in Fix(\tau)$,
		\item[$(A3)$] $\sigma_{\gamma(h)} = i_{h^{-1}\alpha(h)}$ and $\tau_{h^{-1}\alpha(h)} = i_{\gamma(h)}$,
		\item[$(A4)$] $\beta(k)\in Fix(\sigma)$ and $\tau_{\beta(k)}(k^{-1})\delta(k)\in Fix(\tau)$,
		\item[$(A5)$] $\sigma_{\tau_{\beta(k)}(k^{-1})\delta(k)} = i_{(\beta(k))^{-1}}$ and $\tau_{\beta(k)} = i_{\tau_{\beta(k)}(k^{-1})\delta(k)}$,
		\item[$(A6)$] $h^{-1}\alpha(h)\beta(k) = \beta(k)h^{-1}\alpha(h)$ and $\gamma(h)\tau_{\beta(k)}(k^{-1})\delta(k) = \tau_{\beta(k)}(k^{-1})\delta(k)\gamma(h)$,
		\item[$(A7)$] $\delta(kk^{\prime}) = \tau_{\beta(k^{\prime})}(\delta(k))\delta(k^{\prime})$,
		\item[$(A8)$] $\beta(k)\sigma_{\delta(k)}(\alpha(h)) = \alpha(\sigma_{k}(h))\beta(\tau_{h}(k))$,
		\item[$(A9)$]  $\tau_{\alpha(h)}(\delta(k))\gamma(h) = \gamma(\sigma_{k}(h))\delta(\tau_{h}(k)$,
		\item[$(A10)$] for any $h^{\prime}k^{\prime}\in G$, there exists a unique $h\in H$ and $k\in K$ such that $h^{\prime} = \alpha(h)\beta(k)$ and $k^{\prime} = \gamma(h)\delta(k)$.
	\end{itemize}
	\n  for all $h, h^{\prime}\in H$ and $k,k^{\prime}\in K$. Then, the set $\mathcal{A}_{c}$ forms a group with the binary operation defined below,
	\begin{equation*}
	\begin{pmatrix}
	\alpha^{\prime} & \beta^{\prime}\\
	\gamma^{\prime} & \delta^{\prime}
	\end{pmatrix} \begin{pmatrix}
	\alpha & \beta\\
	\gamma & \delta
	\end{pmatrix} = \begin{pmatrix}
	\alpha^{\prime}\alpha + \beta^{\prime}\gamma & \alpha^{\prime}\beta +  \beta^{\prime}\delta\\
	\gamma^{\prime}\alpha + \delta^{\prime}\gamma & \gamma^{\prime}\beta + \delta^{\prime}\delta
	\end{pmatrix}.
	\end{equation*}

\begin{remark}\label{rem}
	Let  $\begin{pmatrix}
	\alpha & \beta\\
	\gamma & \delta
	\end{pmatrix}\in \mathcal{A}_{c}$ and $h\in \sigma(h)$ and $k\in Fix(\tau)$. Then using $(C4), (C5)$ and $(A2), (A4)$, we get $\alpha(h)\in Fix(\sigma)$, $\delta(k)\in Fix(\tau)$, $\alpha(h)\beta(k^{\prime})=\beta(k^{\prime})\alpha(h)$ and $\gamma(h^{\prime})\delta(k) = \delta(k)\gamma(h^{\prime})$, for all $h^{\prime}\in H$ and $k^{\prime}\in K$.
\end{remark}

	\begin{theorem}	
		Let $G$ be the Zappa-Sz\'{e}p product of two groups $H$ and $K$. Let $\mathcal{A}_{c}$ be as above. Then there is an isomorphism of groups between $Aut_{c}(G)$ and $\mathcal{A}_{c}$ given by $\theta \longleftrightarrow \begin{pmatrix}
		\alpha & \beta\\
		\gamma & \delta
		\end{pmatrix}$, where $\theta(h) =  \alpha(h)\gamma(h)$ and $\theta(k) = \beta(k)\delta(k)$, for all $h\in H$ and $k\in K$.
	\end{theorem}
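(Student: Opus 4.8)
The plan is to make the stated correspondence explicit and then check it is a multiplication-preserving bijection. Given $\theta \in Aut_{c}(G)$, write each value in its unique Zappa-Sz\'{e}p coordinates, $\theta(h) = (\alpha(h),\gamma(h))$ and $\theta(k) = (\beta(k),\delta(k))$ for $h\in H$, $k\in K$, which defines $\alpha \in Map(H,H)$, $\gamma \in Map(H,K)$, $\beta \in Map(K,H)$, $\delta \in Map(K,K)$. I would organise the argument in three stages: (i) $\theta \in Aut_{c}(G)$ forces the quadruple to satisfy $(A1)$--$(A10)$; (ii) conversely every matrix in $\mathcal{A}_{c}$ arises from a unique central automorphism; (iii) the correspondence carries composition to the displayed matrix product, hence is an isomorphism.

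For stage (i), the centrality conditions come straight from Theorem \ref{zpcn}. Computing in coordinates and using $(C1),(C2)$ gives $h^{-1}\theta(h) = (h^{-1}\alpha(h),\gamma(h))$; since this lies in $Z(G)$, Theorem \ref{zpcn} yields exactly $(A2)$ and $(A3)$. Likewise the $H$-coordinate of $k^{-1}\theta(k)$ is $\sigma_{k^{-1}}(\beta(k))$, which Theorem \ref{zpcn} places in $Fix(\sigma)$; applying $\sigma_{k}$ and using $(C1),(C3)$ gives $\beta(k) = \sigma_{k^{-1}}(\beta(k)) \in Fix(\sigma)$, so that $k^{-1}\theta(k) = (\beta(k),\tau_{\beta(k)}(k^{-1})\delta(k))$, and its membership in $Z(G)$ yields $(A4),(A5)$. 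The remaining conditions encode that $\theta$ is a homomorphism: expanding $\theta(hh') = \theta(h)\theta(h')$ and reading off the two coordinates gives $(A1)$ together with $\gamma(hh') = \tau_{\alpha(h')}(\gamma(h))\gamma(h')$, which becomes $\gamma(hh') = \gamma(h)\gamma(h')$ because $\gamma(h)\in Fix(\tau)$; symmetrically $\theta(kk') = \theta(k)\theta(k')$ gives $(A7)$ and $\beta \in Hom(K,H)$. Applying $\theta$ to the matched-pair relation $kh = \sigma_{k}(h)\tau_{h}(k)$ and simplifying both sides with $\beta(K)\subseteq Fix(\sigma)$ and $\gamma(H)\subseteq Fix(\tau)$ produces $(A8)$ and $(A9)$. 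Condition $(A6)$ is the statement that the two central elements $h^{-1}\theta(h)$ and $k^{-1}\theta(k)$ commute in the abelian group $Z(G)$, and $(A10)$ is a restatement of bijectivity of $\theta$, once one records that $\theta(hk) = (\alpha(h)\beta(k),\gamma(h)\delta(k))$ (again collapsing the $\sigma,\tau$ terms by the $Fix$ conditions).

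For stage (ii), given a matrix in $\mathcal{A}_{c}$ I would define $\theta$ by $\theta(hk) = (\alpha(h)\beta(k),\gamma(h)\delta(k))$, an unambiguous map on $G$ by uniqueness of factorisation. The key verification is that $\theta$ is a homomorphism: it respects multiplication inside $H$ (from $(A1)$, $\gamma\in Hom(H,K)$ and $(A2)$), inside $K$ (from $(A7)$, $\beta\in Hom(K,H)$ and $(A4)$), and the mixing relation $\theta(k)\theta(h) = \theta(\sigma_{k}(h))\theta(\tau_{h}(k))$ (from $(A8),(A9)$). Bijectivity is immediate from $(A10)$. For centrality it suffices to check generators: $(A2)$--$(A3)$ and $(A4)$--$(A5)$ give $h^{-1}\theta(h),\,k^{-1}\theta(k)\in Z(G)$ via Theorem \ref{zpcn}, and for a general $g=hk$ one writes $\theta(h)=hz_{1}$, $\theta(k)=kz_{2}$ with $z_{1},z_{2}\in Z(G)$ and computes $g^{-1}\theta(g) = z_{1}z_{2}\in Z(G)$.

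Finally, for stage (iii) I would compute the coordinates of $(\theta'\circ\theta)(h) = \theta'(\alpha(h))\theta'(\gamma(h))$ and $(\theta'\circ\theta)(k) = \theta'(\beta(k))\theta'(\delta(k))$; collapsing the cross terms using $\beta'(\gamma(h))\in Fix(\sigma)$ and $\gamma'(\alpha(h))\in Fix(\tau)$ (and the analogues in $k$) reduces these to $(\alpha'\alpha + \beta'\gamma,\ \gamma'\alpha + \delta'\gamma)$ and $(\alpha'\beta + \beta'\delta,\ \gamma'\beta + \delta'\delta)$, which is precisely the displayed product of the two matrices. Hence the correspondence is a multiplication-preserving bijection, so it is an isomorphism and $\mathcal{A}_{c}$ inherits its group structure from $Aut_{c}(G)$. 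The main obstacle throughout is the careful bookkeeping in the Zappa-Sz\'{e}p product: invoking the $Fix(\sigma)$ and $Fix(\tau)$ memberships at the right moments to collapse the $\sigma_{(\cdot)}(\cdot)$ and $\tau_{(\cdot)}(\cdot)$ factors, and in particular confirming in stage (ii) that the homomorphism property survives the matched-pair relation and not merely the internal $H$- and $K$-multiplications.
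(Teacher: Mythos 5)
Your proposal is correct and follows essentially the same route as the paper: the same coordinate correspondence, the same use of Theorem \ref{zpcn} to extract $(A2)$--$(A6)$, the same uniqueness-of-factorisation arguments for $(A1)$, $(A7)$--$(A9)$, and the same matrix-product computation for multiplicativity. Your two local shortcuts are in fact slightly cleaner than the paper's: you get injectivity of the constructed $\theta$ directly from the uniqueness clause of $(A10)$ (the paper instead cites two results from \cite{rv}), and you verify centrality via $g^{-1}\theta(g)=z_{1}z_{2}$ with $z_{1},z_{2}\in Z(G)$ rather than by the paper's explicit expansion using $(A2)$--$(A6)$.
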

\begin{proof}
	Let $\theta \in Aut_{c}(G)$. Then, define the maps $\alpha, \beta, \gamma$ and $\delta$ by means of $\theta(h) = \alpha(h)\gamma(h)$ and $\theta(k) = \beta(k)\delta(k)$, for all $h\in H$ and $k\in K$. Now, $h^{-1}\alpha(h)\gamma(h) = h^{-1}\theta(h)\in Z(G)$. Therefore, by Theorem \ref{zpcn}, $(A2)$ and $(A3)$ holds. Similarly, $\sigma_{k^{-1}}(\beta(k))\tau_{\beta(k)}(k^{-1})\delta(k) = k^{-1}\beta(k)\delta(k) = k^{-1}\theta(k)\in Z(G)$. Therefore, by Theorem \ref{zpcn}, $\sigma_{k^{-1}}(\beta(k))\in Fix(\sigma)$ and $\tau_{\beta(k)}(k^{-1})\delta(k) \in Fix(\tau)$. Now, $\sigma_{k^{-1}}(\beta(k)) = \sigma_{k}(\sigma_{k^{-1}}(\beta(k)))= \sigma_{kk^{-1}}(\beta(k)) = \sigma_{1}(\beta(k)) = \beta(k)$. Thus $(A4)$ and $(A5)$ holds.

	Using, $h^{-1}\alpha(h)\gamma(h) \in Z(G)$, we get $\beta(k)h^{-1}\alpha(h)\gamma(h) = h^{-1}\alpha(h)\gamma(h)\beta(k) = h^{-1}\alpha(h)$ $\sigma_{\gamma(h)}(\beta(k))\tau_{\beta(k)}(\gamma(h)) = h^{-1}\alpha(h)\beta(k)\gamma(h)$. This implies that $h^{-1}\alpha(h)\beta(k) = \beta(k)$ $h^{-1}\alpha(h)$. Using the similar argument for $\beta(k)\tau_{\beta(k)}(k^{-1})\delta(k)\in Z(G)$, we get $\gamma(h)\tau_{\beta(k)}(k^{-1})\delta(k) = \tau_{\beta(k)}(k^{-1})\delta(k)\gamma(h)$. Therefore, $(A6)$ holds. Note that, for all $h,h^{\prime}\in H$,
	\begin{align*}
	\alpha(hh^{\prime})\gamma(hh^{\prime}) =&\; \theta(hh^{\prime})\\
	 =&\; \theta(h)\theta(h^{\prime})\\
	  =&\; (\alpha(h)\gamma(h))(\alpha(h^{\prime})\gamma(h^{\prime}))\\
	   =&\; \alpha(h)\sigma_{\gamma(h)}(\alpha(h^{\prime})) \tau_{\alpha(h^{\prime})}(\gamma(h))\gamma(h^{\prime})\\
	    =&\; \alpha(h)\sigma_{\gamma(h)}(\alpha(h^{\prime}))\gamma(h)\gamma(h^{\prime}).
	\end{align*}
	  Using the uniqueness of representation, $\alpha(hh^{\prime}) = \alpha(h)\sigma_{\gamma(h)}(\alpha(h^{\prime}))$ and $\gamma(hh^{\prime}) = \gamma(h)\gamma(h^{\prime})$. Thus $\gamma \in Hom(H, K)$ and $(A1)$ holds. Using the similar argument for $\theta(kk^{\prime}) = \theta(k)\theta(k^{\prime})$, we get $\beta\in Hom(K, H)$ and $(A7)$ holds. Now,
	  \begin{align*}
	 \beta(k)\sigma_{\delta(k)}(\alpha(h))\tau_{\alpha(h)}(\delta(k))\gamma(h)=&\; \beta(k)\delta(k) \alpha(h)\gamma(h)\\ =&\;  \theta(k)\theta(h)\\
	  =&\; \theta(kh)\\ =&\; \theta(\sigma_{k}(h)\tau_{h}(k))\\
	   =&\; \theta(\sigma_{k}(h))\theta(\tau_{h}(k))\\
	    =&\; \alpha(\sigma_{k}(h))\gamma(\sigma_{k}(h)) \beta(\tau_{h}(k))\delta(\tau_{h}(k))\\
	     =&\; \alpha(\sigma_{k}(h))\beta(\tau_{h}(k))\gamma(\sigma_{k}(h))\delta(\tau_{h}(k)).
	  \end{align*}
	   Using the uniqueness of representation, we get $(A8)$ and $(A9)$. Since $\theta$ is onto, $(A10)$ holds. Thus, by the uniqueness of representation, to every $\theta\in Aut_{c}(G)$, we can associate a unique matrix $\begin{pmatrix}
	\alpha & \beta\\
	\gamma & \delta	
	\end{pmatrix} \in \mathcal{A}_{c}$. Thus we have a map $\eta: Aut_{c}(G) \longrightarrow \mathcal{A}_{c}$ given by $\theta \mapsto \begin{pmatrix}
	\alpha & \beta \\ \gamma & \delta
	\end{pmatrix}$.
	
	Now, let $\begin{pmatrix}
	\alpha & \beta\\
	\gamma & \delta	
	\end{pmatrix} \in \mathcal{A}_{c}$ satisfying the conditions $(A1)-(A10)$. Then, define $\theta : G \longrightarrow G$ by $\theta(h) = \alpha(h)\gamma(h)$ and $\theta(k) = \beta(k)\delta(k)$, for all $h\in H$ and $k\in K$. Using  $(A1)$, $(A7)-(A9)$, one can easily check that $\theta$ is an endomorphism of $G$. Also, by $(A10)$, $\theta$ is onto. Now, let $\theta(hk) = 1$. Then $1 = \theta(h)\theta(k) = (\alpha(h)\beta(k))(\gamma(h)\delta(k))$. Thus, by the uniqueness of representation, we get $\alpha(h)\beta(k) = 1$ and $\gamma(h)\delta(k) = 1$. By \cite[Proposition 2.1, p. 4]{rv}, we get $\alpha(h) = 1 = \gamma(h)$ and $\beta(k) = 1 = \delta(k)$. Therefore, by \cite[Lemma 2.2, p. 4]{rv}, we get $h = 1 =k$ and so, $\theta$ is one-one. Thus $\theta \in Aut(G)$. Now, let $g = hk\in G$. Then $g^{-1}\theta(g) = k^{-1}h^{-1}\alpha(h)\beta(k)\gamma(h)\delta(k) = \sigma_{k^{-1}}(h^{-1}\alpha(h)\beta(k)) \tau_{h^{-1}\alpha(h)\beta(k)}(k^{-1}) \gamma(h)\delta(k)$. Using $(A2)-(A6)$, we get $g^{-1}\theta(g)\in Z(G)$. Hence, $\theta\in Aut_{c}(G)$. Therefore, $\eta$ is a bijection. Now, for all $g = hk\in G$, we have
	\begin{align*}
	\theta^{\prime}\theta(hk) =&\; \theta^{\prime}(\alpha(h)\beta(k)\gamma(h)\delta(k))\\
	=&\; \alpha^{\prime}(\alpha(h)\beta(k))\beta^{\prime}(\gamma(h)\delta(k))\gamma^{\prime}(\alpha(h)\beta(k))\delta^{\prime}(\gamma(h)\delta(k))\\
	=&\; \alpha^{\prime}(\alpha(h))\sigma_{\gamma^{\prime}(\alpha(h))}(\alpha^{\prime}(\beta(k)))\beta^{\prime}(\gamma(h)\delta(k))\gamma^{\prime}(\alpha(h)\beta(k))\tau_{\beta^{\prime}(\delta(k))}(\delta^{\prime}(\gamma(h)))\delta^{\prime}(\delta(k))\\ 
	=&\; \alpha^{\prime}(\alpha(h))\alpha^{\prime}(\beta(k))\beta^{\prime}(\gamma(h))\beta^{\prime}(\delta(k))\gamma^{\prime}(\alpha(h))\gamma^{\prime}(\beta(k))\delta^{\prime}(\gamma(h))\delta^{\prime}(\delta(k))\\
	&\; (\text{Using $(A2)$, $(A4)$ and the Remark \ref{rem}})\\
	=&\; \alpha^{\prime}\alpha(h)\beta^{\prime}\gamma(h)\alpha^{\prime}\beta(k)\beta^{\prime}\delta(k)\gamma^{\prime}\alpha(h)\delta^{\prime}\gamma(h)\gamma^{\prime}\beta(k)\delta^{\prime}(\delta(k))\; (\text{Using the Remark \ref{rem}})\\
	=&\; (\alpha^{\prime}\alpha + \beta^{\prime}\gamma)(h) (\alpha^{\prime}\beta + \beta^{\prime}\delta)(k) (\gamma^{\prime}\alpha + \delta^{\prime}\gamma)(h) (\gamma^{\prime}\beta + \delta^{\prime}\delta)(k).
		\end{align*}  
	Writing $hk$ as $\begin{pmatrix}
	h\\ k
	\end{pmatrix}$, and the map $\begin{pmatrix}
	\alpha & \beta \\ \gamma & \delta
	\end{pmatrix}$, we get 
	\[\theta(hk) = \begin{pmatrix}
	\alpha & \beta \\ \gamma & \delta
	\end{pmatrix}\begin{pmatrix}
	h \\ k
	\end{pmatrix} = \begin{pmatrix}
	\alpha(h)\beta(k)\\ \gamma(h)\delta(k)
	\end{pmatrix}\]
	and \[\theta^{\prime}\theta(hk) = \begin{pmatrix}
	\alpha^{\prime} & \beta^{\prime}\\ \gamma^{\prime} & \delta^{\prime}
	\end{pmatrix}\begin{pmatrix}
	\alpha(h)\beta(k)\\ \gamma(h)\delta(k)
	\end{pmatrix} = \begin{pmatrix}
	\alpha^{\prime}\alpha + \beta^{\prime}\gamma & \alpha^{\prime}\beta + \beta^{\prime}\delta\\ \gamma^{\prime}\alpha + \delta^{\prime}\gamma & \gamma^{\prime}\beta + \delta^{\prime}\delta
	\end{pmatrix}\begin{pmatrix}
	h \\ k
	\end{pmatrix}. \]
	Therefore, $\eta(\theta^{\prime}\theta) = \begin{pmatrix}
	\alpha^{\prime}\alpha + \beta^{\prime}\gamma & \alpha^{\prime}\beta + \beta^{\prime}\delta\\ \gamma^{\prime}\alpha + \delta^{\prime}\gamma & \gamma^{\prime}\beta + \delta^{\prime}\delta
	\end{pmatrix} = \eta(\theta^{\prime})\eta(\theta)$. Hence, $\eta$ is an isomorphism of groups.
\end{proof}

	\n We will identify the central automorphisms of $G$ with the corresponding matrices in $\mathcal{A}_{c}$. Note that, if $h^{-1}\alpha(h)\in H^{\ast}$, then $\tau_{\alpha(h)}(k) = \tau_{h}(k)$, for all $h\in H$ and $k\in K$. Also, if $k^{-1}\delta(k)\in K^{\ast}$, then $\sigma_{\delta(k)}(h) = \sigma_{k}(h)$, for all $h\in H$ and $k\in K$. Let 
	\begin{align*}
	P =&\; \{\alpha\in Aut_{c}(H) \mid \sigma_{k}(\alpha(h)) = \alpha(\sigma_{k}(h)), h^{-1}\alpha(h)\in H^{\ast}\; \forall\; h\in H, k\in K\},\\
	Q =&\; \{\beta\in Hom(K,H^{\ast}) \mid \beta(k) = \beta(\tau_{h}(k))\; \forall\; h\in H, k\in K\},\\
	R =&\; \{\gamma \in Hom(H,K^{\ast}) \mid \gamma(\sigma_{k}(h)) = \gamma(h)\; \forall\; h\in H, k\in K\},\\
	S =&\; \{\delta\in Aut_{c}(K) \mid \tau_{h}(\delta(k)) = \delta(\tau_{h}(k)), k^{-1}\delta(k)\in K^{\ast}\; \forall\; h\in H, k\in K\}.
\end{align*}
	Then one can easily check that $P$, $Q$, $R$ and $S$ are all subgroups of the group $Aut_{c}(G)$. Let
	\begin{center}
		$\begin{matrix}
		A = \left\{\begin{pmatrix}
		\alpha & 0\\
		0 & 1
		\end{pmatrix}\mid \alpha\in P\right\}, & B = \left\{\begin{pmatrix}
		1 & \beta\\
		0 & 1
		\end{pmatrix}\mid \beta\in Q\right\},\\
		C = \left\{\begin{pmatrix}
		1 & 0 \\
		\gamma & 1
		\end{pmatrix}\mid \gamma\in R\right\}, & D = \left\{\begin{pmatrix}
		1 & 0\\
		0 & \delta
		\end{pmatrix}\mid \delta\in S\right\}.

		\end{matrix}$
	\end{center}
		\n be the corresponding subsets of $\mathcal{A}_{c}$, where $0$ is the trivial group homomorphism and $1$ is the identity automorphism of groups. Then one can easily check that $A$, $B$, $C$ and $D$ are subgroups of $\mathcal{A}_{c}$. Note that $A$ and $D$ normalize $B$ and $C$. By the similar argument as in the proof of the Theorem \cite[Theorem 2.4, p. 7]{rv}, we have the following Theorem.
		\begin{theorem}\label{s1t1}
			Let $G$ be the Zappa-Sz\'{e}p product of two groups $H$ and $K$. Let $A, B, C$ and $D$ be defined as above. Then, if $1-\beta\gamma \in P$, for all maps $\beta$ and $\gamma$, then $ABCD = \mathcal{A}_{c}$.
		\end{theorem}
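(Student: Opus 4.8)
The plan is to prove the nontrivial inclusion $\mathcal{A}_{c}\subseteq ABCD$ by an explicit Gaussian-elimination (LU-type) factorisation; the reverse inclusion $ABCD\subseteq \mathcal{A}_{c}$ is immediate, since $A,B,C,D$ are subgroups of $\mathcal{A}_{c}$ and $\mathcal{A}_{c}$ is closed under the product. So, given an arbitrary $\begin{pmatrix}\alpha & \beta\\ \gamma & \delta\end{pmatrix}\in \mathcal{A}_{c}$, I would look for $\alpha_{1}\in P$, $\beta_{1}\in Q$, $\gamma_{1}\in R$ and $\delta_{1}\in S$ with
\[\begin{pmatrix}\alpha_{1} & 0\\ 0 & 1\end{pmatrix}\begin{pmatrix}1 & \beta_{1}\\ 0 & 1\end{pmatrix}\begin{pmatrix}1 & 0\\ \gamma_{1} & 1\end{pmatrix}\begin{pmatrix}1 & 0\\ 0 & \delta_{1}\end{pmatrix}=\begin{pmatrix}\alpha & \beta\\ \gamma & \delta\end{pmatrix}.\]
First I would multiply out the left-hand side using the product of $\mathcal{A}_{c}$, and (using that $\alpha_{1}\in P\subseteq Aut_{c}(H)$ is a homomorphism, so distributes over the pointwise product $+$) obtain
\[\begin{pmatrix}\alpha_{1}+\alpha_{1}\beta_{1}\gamma_{1} & \alpha_{1}\beta_{1}\delta_{1}\\ \gamma_{1} & \delta_{1}\end{pmatrix}.\]

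Comparing entries then pins everything down in order. The bottom row gives $\gamma_{1}=\gamma$ and $\delta_{1}=\delta$ at once; since $\delta\in Aut_{c}(K)$ is invertible, the top-right entry forces $\alpha_{1}\beta_{1}=\beta\delta^{-1}$, and substituting this into the top-left entry yields the Schur-type complement $\alpha_{1}=\alpha-\beta\delta^{-1}\gamma$ and then $\beta_{1}=\alpha_{1}^{-1}\beta\delta^{-1}$. Thus the four candidate maps are uniquely determined by $\alpha,\beta,\gamma,\delta$, and the whole theorem reduces to checking the four memberships $\gamma_{1}\in R$, $\delta_{1}\in S$, $\beta_{1}\in Q$ and $\alpha_{1}\in P$, after which one re-reads the displayed product to confirm it really equals the given matrix.

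For the bottom row and for $\beta_{1}$, I expect the verification to be routine bookkeeping with the defining relations: the conditions $(A2)$, $(A9)$ and Remark \ref{rem} should give $\gamma(h)\in K^{\ast}$ together with $\gamma(\sigma_{k}(h))=\gamma(h)$, so that $\gamma_{1}=\gamma\in R$; symmetrically $(A4)$, $(A7)$ and the Remark give $\delta_{1}=\delta\in S$; and $\beta_{1}\in Q$ should follow once $\alpha_{1}$ is known to be a central automorphism, using $(A4)$, $(A8)$ and the fact that $\beta$ takes values in $H^{\ast}\subseteq Z(H)$ so the relevant $\sigma,\tau$-twistings collapse. The main obstacle is the remaining membership $\alpha_{1}\in P$. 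Here I would first note, via Theorem \ref{zpcn} and conditions $(A2)$, $(A4)$, that the correction term $\beta\delta^{-1}\gamma$ is $H^{\ast}$-valued and hence central, so $\alpha_{1}=\alpha-\beta\delta^{-1}\gamma$ differs from $\alpha$ by a central-valued homomorphism and can be rewritten in the form $1-\beta'\gamma'$ for suitable $\beta'\in Hom(K,H)$, $\gamma'\in Hom(H,K)$. The standing hypothesis ``$1-\beta\gamma\in P$ for all maps $\beta,\gamma$'' is exactly what is needed to conclude $\alpha_{1}\in P$; in particular it guarantees that $\alpha_{1}$ is invertible in $Aut_{c}(H)$, which retroactively legitimises the expression $\beta_{1}=\alpha_{1}^{-1}\beta\delta^{-1}$ used above. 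This is the one step that genuinely fails without the hypothesis, and it is the precise analogue of the corresponding step in the proof of \cite[Theorem 2.4, p. 7]{rv}; once it is in place, all four memberships hold and the factorisation exhibits the given matrix as an element of $ABCD$.
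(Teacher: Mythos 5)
Your reduction of the problem to the four memberships $\gamma\in R$, $\delta\in S$, $\alpha_{1}=\alpha-\beta\delta^{-1}\gamma\in P$, $\beta_{1}=\alpha_{1}^{-1}\beta\delta^{-1}\in Q$ has the right shape, and your computation of the product of the four factors is correct; but the argument collapses at the step you dismiss as routine, namely the claim that $(A4)$, $(A7)$ and Remark \ref{rem} give $\delta\in S$. That claim is false: the conditions $(A1)$--$(A10)$ alone do not even force $\delta$ to be injective. Concretely, take $H=K=\mathbb{Z}_{p}$ with trivial actions $\sigma,\tau$, so that $G=\mathbb{Z}_{p}\times\mathbb{Z}_{p}$ is a Zappa--Sz\'{e}p product and every automorphism is central. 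The swap automorphism $\theta(h,k)=(k,h)$ lies in $Aut_{c}(G)=\mathcal{A}_{c}$ and has matrix $\begin{pmatrix} 0 & \iota^{-1}\\ \iota & 0\end{pmatrix}$, where $\iota:H\rightarrow K$ is the canonical isomorphism and $0$ is the trivial homomorphism; all of $(A1)$--$(A10)$ and Remark \ref{rem} hold for it, yet $\delta=0\notin S$ and $\delta^{-1}$ does not exist. (The theorem's hypothesis fails here, since $1-\iota^{-1}\iota$ is the trivial map, which is not in $P$ --- that is exactly the point: $\delta\in S$ can only come \emph{from} the hypothesis, never from the $(A)$-conditions.) Your proof, however, invokes the hypothesis only once, to handle $\alpha_{1}$ --- a map that is \emph{defined} using $\delta^{-1}$. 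So the one genuine use of the hypothesis sits downstream of the very invertibility it is needed to justify; this is a circularity, not a missing routine verification. The same defect, less dramatically, affects your step ``$\gamma\in R$'': in the nonabelian case $(A2)$, $(A3)$ give only $\gamma(h)\in Fix(\tau)$ and $\sigma_{\gamma(h)}=i_{h^{-1}\alpha(h)}$, not $\gamma(h)\in \ker(\sigma)\cap Z(K)$.

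The missing idea is to play $\theta$ against $\theta^{-1}$. Since $Aut_{c}(G)$ is a group, $\theta^{-1}\in\mathcal{A}_{c}$ has a matrix $\begin{pmatrix}\alpha^{\prime} & \beta^{\prime}\\ \gamma^{\prime} & \delta^{\prime}\end{pmatrix}$, and applying the isomorphism $\eta$ to $\theta\theta^{-1}=1=\theta^{-1}\theta$ yields, among other identities, $\alpha\alpha^{\prime}+\beta\gamma^{\prime}=1$ and $\alpha^{\prime}\alpha+\beta^{\prime}\gamma=1$. This is where the hypothesis bites first: $\alpha\alpha^{\prime}=1-\beta\gamma^{\prime}$ and $\alpha^{\prime}\alpha=1-\beta^{\prime}\gamma$ lie in $P$, hence are bijections, hence $\alpha$ and $\alpha^{\prime}$ are bijective; invertibility of the remaining entries (the Schur-type complements, and ultimately $\delta$) is then extracted from the other identities, and only after that can a factorisation into $A$, $B$, $C$, $D$ be legitimately written down. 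Note that the paper itself offers no self-contained proof --- it only points to the analogous \cite[Theorem 2.4]{rv} --- but that is the argument being invoked, and it is precisely the mechanism your proposal lacks. A second, smaller gap: your assertion that $\alpha_{1}=\alpha-\beta\delta^{-1}\gamma$ ``can be rewritten in the form $1-\beta^{\prime}\gamma^{\prime}$'' is unsupported, since $\alpha=1+f$ with $f(h)=h^{-1}\alpha(h)$, and nothing makes the correction term $f$ factor through $K$; so even granting $\delta^{-1}$, the hypothesis does not apply to $\alpha_{1}$ in the form you use it.
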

	\begin{corollary}
		Let $G$ be the Zappa-Sz\'{e}p product of two abelian groups $H$ and $K$. Let $A, B, C$ and $D$ be defined as above. Then, if $1-\beta\gamma \in P$, for all maps $\beta$ and $\gamma$, then $ABCD = \mathcal{A}_{c}$, where  
		\[\mathcal{A}_{c} = \left\{\begin{pmatrix}
		\alpha & \beta \\ \gamma & \delta
		\end{pmatrix} \mid \begin{matrix}
		\alpha \in Epi(H) & \beta\in Hom(K,H^{\ast})\\ \gamma \in Hom(H, K^{\ast} & \delta\in Epi(K))
		\end{matrix} \right\}\]
		and the maps $\alpha, \beta, \gamma$ and $\delta$ satisfies $h^{-1}\alpha(h)\in H^{\ast}$,  $k^{-1}\delta(k)\in K^{\ast}$ and $(A8)- (A10)$.
	\end{corollary}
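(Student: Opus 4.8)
The statement packages two assertions: the factorization $ABCD=\mathcal{A}_{c}$ and the explicit shape of $\mathcal{A}_{c}$ for abelian $H,K$. The plan is to get the first assertion for free from Theorem \ref{s1t1}, which is proved for an arbitrary Zappa-Sz\'ep product and whose hypothesis $1-\beta\gamma\in P$ is exactly what is assumed here; so $ABCD=\mathcal{A}_{c}$ needs no new argument. All the substance lies in the second assertion, namely that when $H$ and $K$ are abelian the defining list $(A1)$--$(A10)$ of $\mathcal{A}_{c}$ collapses to the shorter list in the statement, and I would prove this by two inclusions.

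For the forward reduction I would push abelianness into $(A1)$--$(A7)$ through the single fact used in Corollary \ref{s1p1}, that $i_{x}=I_{H}$ and $i_{y}=I_{K}$ for all $x\in H$, $y\in K$. Then in $(A3)$ the relations $\sigma_{\gamma(h)}=i_{h^{-1}\alpha(h)}$ and $\tau_{h^{-1}\alpha(h)}=i_{\gamma(h)}$ become $\gamma(h)\in\ker(\sigma)$ and $h^{-1}\alpha(h)\in\ker(\tau)$; together with $(A2)$ and $Z(H)=H$, $Z(K)=K$ this gives $h^{-1}\alpha(h)\in H^{\ast}$ and $\gamma(h)\in K^{\ast}$, so $\gamma\in Hom(H,K^{\ast})$. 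Symmetrically $(A4)$ and $(A5)$ give $\beta(k)\in H^{\ast}$, whence $\beta\in Hom(K,H^{\ast})$, and since $\beta(k)\in\ker(\tau)$ forces $\tau_{\beta(k)}(k^{-1})=k^{-1}$, the expression $\tau_{\beta(k)}(k^{-1})\delta(k)$ simplifies to $k^{-1}\delta(k)$ and is placed in $K^{\ast}$. Because $\gamma(h)\in\ker(\sigma)$ and $\beta(k)\in\ker(\tau)$, the twisted products in $(A1)$ and $(A7)$ untwist, so $(A1)$ reads $\alpha(hh')=\alpha(h)\alpha(h')$ and $(A7)$ reads $\delta(kk')=\delta(k)\delta(k')$, i.e. $\alpha\in End(H)$ and $\delta\in End(K)$; $(A6)$ becomes automatic as $H,K$ are abelian, while $(A8)$--$(A10)$ carry over verbatim. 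At this stage every matrix of $\mathcal{A}_{c}$ has the stated shape except that $\alpha$ and $\delta$ are so far only endomorphisms.

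The surjectivity of $\alpha$ and $\delta$ is the one point that does \emph{not} fall out of the abelian bookkeeping, and this is where I expect the genuine obstacle to sit: an endomorphism $\alpha=I_{H}+c$ with $c\in Hom(H,H^{\ast})$ need not be onto, and condition $(A10)$ by itself (bijectivity of $(h,k)\mapsto(\alpha(h)\beta(k),\gamma(h)\delta(k))$) only returns the vacuous identity $\alpha(H)H^{\ast}=H$. I would therefore extract $Epi$ from the factorization rather than from $(A10)$. Multiplying out a product in $ABCD$ gives $(2,2)$-entry $\delta=\delta_{0}\in S\subseteq Aut_{c}(K)$ and $(1,1)$-entry $\alpha=\alpha_{0}+\alpha_{0}\beta_{0}\gamma_{0}=\alpha_{0}(1+\beta_{0}\gamma_{0})$ with $\alpha_{0}\in P$; since $\delta_{0}$ is an automorphism and, by the hypothesis $1-\beta\gamma\in P$, the factor $1+\beta_{0}\gamma_{0}$ again lies in $P\subseteq Aut_{c}(H)$, both $\alpha$ and $\delta$ are (central) automorphisms, in particular epimorphisms. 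This is exactly the step that forces the hypothesis of the corollary to be invoked, and I would flag it as the crux.

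For the reverse inclusion I would take a matrix with $\alpha\in Epi(H)$, $\beta\in Hom(K,H^{\ast})$, $\gamma\in Hom(H,K^{\ast})$, $\delta\in Epi(K)$ subject to $h^{-1}\alpha(h)\in H^{\ast}$, $k^{-1}\delta(k)\in K^{\ast}$ and $(A8)$--$(A10)$, and check that it satisfies all of $(A1)$--$(A10)$ by reading the abelian simplifications backwards: the membership and homomorphism hypotheses rebuild $(A1)$--$(A5)$, $(A6)$ holds trivially, $(A7)$ is the homomorphism property of $\delta$, and $(A8)$--$(A10)$ are present by assumption, where I would use the remarks that $h^{-1}\alpha(h)\in H^{\ast}$ gives $\tau_{\alpha(h)}=\tau_{h}$ and $k^{-1}\delta(k)\in K^{\ast}$ gives $\sigma_{\delta(k)}=\sigma_{k}$ to align $(A8)$ and $(A9)$ with the matched-pair identities $(C4)$--$(C5)$. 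Combining the two inclusions with $ABCD=\mathcal{A}_{c}$ from Theorem \ref{s1t1} then yields both the factorization and the explicit description of $\mathcal{A}_{c}$.
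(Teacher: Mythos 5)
Your proof is correct and follows the route the paper itself intends: the paper states this corollary without any written proof, treating it as an immediate consequence of Theorem \ref{s1t1} combined with the abelian collapse of conditions $(A1)$--$(A10)$ (exactly the simplification used in Corollary \ref{s1p1}), which is what you carry out. Your one substantive addition --- observing that surjectivity of $\alpha$ and $\delta$ does \emph{not} follow from $(A10)$ alone (which only yields $\alpha(H)H^{\ast}=H$) and must instead be extracted from the factorization $ABCD=\mathcal{A}_{c}$, writing $\alpha=\alpha_{0}(1+\beta_{0}\gamma_{0})$ with $1+\beta_{0}\gamma_{0}=1-(-\beta_{0})\gamma_{0}\in P$ by the hypothesis --- is precisely the detail the paper glosses over, and you handle it correctly.
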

	
\section*{Computation}
	Consider a group $G = \langle a,b,c,d,e \mid a^{p} = b^{p} = c^{p} = d^{p} = e^{p} = 1, ab= ba, ac=ca, ad=da, ae= eac, bc= cb, bd =db, be= ebd, cd= dc, ce=ec, de =ed\rangle$, where $p$ is an odd prime. Note that $|G| = p^{5}$. Let $H = \langle a,b,d \mid a^{p} = b^{p}  = d^{p} = 1, ab= ba, ad=da, bd =db \rangle$, $K = \langle c,e \mid c^{p} = e^{p} = 1, ec=ce\rangle$. The mutual actions of $H$ and $K$ are defined for the generators of $H$ and $K$ by 
	\begin{equation}\label{s4e2}
	\sigma_{j}(i) = \left\{\begin{array}{ll}
	bd^{-1}, & \text{if}\; i=b, j = e\\
	i, & otherwise
	\end{array}\right. \; \text{and}\; \tau_{i}(j) = \left\{\begin{array}{ll}
	ec^{-1}, & \text{if}\; i=a, j = e\\
	j, & otherwise.
	\end{array}\right.
	\end{equation}	
	Then $G$ is a Zappa-Sz\'{e}p product of groups $H$ and $K$. Clearly, $Fix(\sigma) = \langle a,d\rangle$, $\ker(\sigma) = \langle c\rangle$, $Fix(\tau) = \langle c\rangle$ and $\ker(\tau) = \langle b,d\rangle$. Therefore, $H^{\ast} = \langle d\rangle$, $K^{\ast} = \langle c\rangle$ and so $Z(G) = \langle c,d\rangle$.

	Now, let $\alpha\in P$. Then $h^{-1}\alpha(h)\in H^{\ast} = \langle d\rangle$, for all $h\in H$. Suppose that $\alpha(a) = ad^{\nu_{1}}$, $\alpha(b) = bd^{\nu_{2}}$ and $\alpha(d) = d^{\nu_{3}}$, where $0\le \nu_{1}, \nu_{2}\le p-1$ and $1\le \nu_{3}\le p-1$. Then the map $\alpha$ can be identified with an element of $GL(3,p)$. Also, using the relations in (\ref{s4e2}), $\sigma_{e}(\alpha(a)) = \alpha(\sigma_{e}(a))$, $\sigma_{e}(\alpha(d)) = \alpha(\sigma_{e}(d))$ and $\sigma_{c}(\alpha(h)) = \alpha(\sigma_{c}(h))$ holds  trivially for all $h\in H$. Now, using $\sigma_{e}(\alpha(b)) = \alpha(\sigma_{e}(b))$, we get $bd^{\nu_{2}-1} = \sigma_{e}(\alpha(b)) = \alpha(\sigma_{e}(b)) = \alpha(bd^{-1}) = bd^{\nu_{2}}d^{-\nu_{3}} = bd^{\nu_{2}-\nu_{3}}$. Thus $\nu_{2}-1\equiv \nu_{2}-\nu_{3} \Mod{p}$ which implies that $\nu_{3} \equiv 1\Mod{p}$. Therefore, $\alpha$ can be identified with the matrix $\begin{pmatrix}
	1& 0 & 0\\
	0 & 1& 0\\
	\nu_{1}& \nu_{2} & 1
	\end{pmatrix}$, where $0\le u,v\le p-1$. Let $x = \begin{pmatrix}
	1& 0 & 0\\
	0 & 1& 0\\
	0& \nu_{2} & 1
	\end{pmatrix}$  and $y = \begin{pmatrix}
	1& 0 & 0\\
	0 & 1& 0\\
	\nu_{1}& 0 & 1
	\end{pmatrix}$. Then $A\simeq \langle x,y \mid x^{p} = y^{p} = 1, xy=yx\rangle \simeq \mathbb{Z}_{p}\times \mathbb{Z}_{p}$.
	
	Let $\beta\in Q$. Then $Im(\beta)\le H^{\ast} =\langle d\rangle$. Let $\beta$ be defined as $\beta(c) = d^{\lambda_{1}}$ and $\beta(e) = d^{\lambda_{2}}$, where $0\le \lambda_{1}, \lambda_{2}\le p-1$. Since $K\simeq \mathbb{Z}_{p}\times \mathbb{Z}_{p}$ and $H^{\ast}\simeq \mathbb{Z}_{p}$, the map $\beta$ can be identified with an element $\begin{pmatrix}
	\lambda_{1} & \lambda_{2}
	\end{pmatrix}$ of $M_{1\times 2}(\mathbb{Z}_{p})$. Clearly, using the relations in (\ref{s4e2}), $\beta(\tau_{a}(c)) = \beta(c)$ and $\beta(\tau_{h}(k)) = \beta(k)$ holds trivially for all $h\in \{b,d\}$ and $k\in K$. Now, using $\beta(\tau_{a}(e)) = \beta(e)$, we get $\beta(e) = \beta(\tau_{a}(e)) = \beta(ec^{-1}) = \beta(e)\beta(c^{-1})$. Thus, $\beta(c) = 1$ which implies that $\lambda_{1} = 0$. Hence, the map $\beta$ can be identified with the matrix $\begin{pmatrix}
	0 & \lambda_{2}
	\end{pmatrix}\in M_{1\times 2}(\mathbb{Z}_{p})$. Let $u = \begin{pmatrix}
	0 & \lambda_{2}
	\end{pmatrix}$. Then, $B \simeq \langle  u \mid u^{p} = 1\rangle \simeq \mathbb{Z}_{p}$.
	
	Now, let $\gamma\in R$. Then $Im(\gamma)\in K^{\ast} = \langle c \rangle$. So, we define $\gamma(a) = c^{\mu_{1}}$,  $\gamma(b) = c^{\mu_{2}}$ and $\gamma(d) = c^{\mu_{3}}$, where $0\le \mu_{1}, \mu_{2}, \mu_{3}\le p-1$. Since $H\simeq \mathbb{Z}_{p}\times \mathbb{Z}_{p} \times \mathbb{Z}_{p}$ and $K^{\ast}\simeq \mathbb{Z}_{p}$, the map $\gamma$ can be identified with an element $\begin{pmatrix}
	\mu_{1} & \mu_{2} & \mu_{3}
	\end{pmatrix}$ of $M_{1\times 3}(\mathbb{Z}_{p})$.  Clearly, using the relations in (\ref{s4e2}), $\gamma(\sigma_{e}(a)) = \gamma(a)$, $\gamma(\sigma_{e}(d)) = \gamma(d)$, $\gamma(\sigma_{c}(h)) = \gamma(h)$ holds trivially for all $h\in H$. Now, using $\gamma(b) = \gamma(\sigma_{e}(b)) = \gamma(bd^{-1}) = \gamma(b)\gamma(d^{-1})$, we get $\gamma(d) = 1$. Therefore, $\mu_{3} = 0$ and $\gamma$ can be identified with the matrix $\begin{pmatrix}
	\mu_{1} & \mu_{2} & 0
	\end{pmatrix}\in M_{1\times 3}(\mathbb{Z}_{p})$. Let $z = \begin{pmatrix}
	\mu_{1} & 0 & 0
	\end{pmatrix}$ and $w = \begin{pmatrix}
	0 & \mu_{2} & 0
	\end{pmatrix}$. Then, $R \simeq \langle z,w \mid z^{p} = w^{p} = 1, zw=wz\rangle  \simeq \mathbb{Z}_{p}\times \mathbb{Z}_{p}$.
	
	Let $\delta\in S$. Then $k^{-1}\delta(k)\in K^{\ast} = \langle c\rangle$, for all $k\in K$. Suppose that $\delta(c) = c^{\rho_{1}}$ and $\delta(e) = ec^{\rho_{2}}$, where $1\le \rho_{1}\le p-1$ and $0\le \rho_{2}\le p-1$. Then the map $\delta$ can be identified with an element of $GL(2,p)$. Clearly, using the relations in (\ref{s4e2}), $\tau_{b}(\delta(e)) = \delta(\tau_{b}(e))$, $\tau_{d}(\delta(e)) = \delta(\tau_{d}(e))$ and $\tau_{h}(\delta(c)) = \delta(\tau_{h}(c))$ holds trivially for all $h\in H$. Now, using $\tau_{a}(\delta(e)) = \delta(\tau_{a}(e))$, we get $ec^{-1+\rho_{2}} = \tau_{a}(\delta(e)) = \delta(\tau_{a}(e)) = \delta(ec^{-1}) = ec^{\rho_{2}-\rho_{1}}$. Thus $\rho_{1} \equiv 1\Mod{p}$. Therefore, $\delta(c) = c$, $\delta(e) = ec^{\rho_{2}}$, where $0\le \rho_{2}\le p-1$. Hence, the map $\delta$ can be identified with the matrix $\begin{pmatrix}
	1 & \rho_{2}\\ 0&1
	\end{pmatrix} \in GL(2,p)$. Let $t = \begin{pmatrix}
	1 & \rho_{2}\\ 0&1
	\end{pmatrix}$. Then $D \simeq \langle t \mid t^{p} = 1\rangle \simeq \mathbb{Z}_{p}$.
	
	Also, one can easily verify that $\alpha\beta = \beta = \beta\delta$, $\gamma\alpha = \gamma = \delta\gamma$, and $\beta\gamma = 0 = \gamma\beta$. Therefore, $Aut_{c}(G) \simeq A\times B \times C\times D$. Hence, $Aut_{c}(G)\simeq \mathbb{Z}_{p}\times \mathbb{Z}_{p} \times \mathbb{Z}_{p}\times \mathbb{Z}_{p}\times \mathbb{Z}_{p}\times \mathbb{Z}_{p}$.

\vspace{0.2 cm}

\noindent \textbf{Acknowledgment:}{ The first author is supported by the Senior Research Fellowship of UGC, India.}


\begin{thebibliography}{9} 
	
	\bibitem{za}  Z. Arad, and E. Fisman, On Finite Factorizable Groups, \textit{J. Algebra}, \textbf{86} (1984), 522--548.
	
	\bibitem{cur82} M. J. Curran, A non-abelian automorphism group with all automorphisms central, \textit{Bull. Austral. Math. Soc.}, \textbf{26} (1982), 393--387.
	
	\bibitem{mc01} M. J. Curran and D. J. McCaughan, Central automorphisms that are almost inner, \textit{Comm. Algebra}, \textbf{29(5)} (2001), 2081--2087.
	
	
	
	\bibitem{af} A. Firat and C. Sinan, Knit Products of Some Groups and Their Applications, \textit{ Rend. Semin. Mat. Univ. Padova}, \textbf{121} (2009), 1--11.
	
	\bibitem{gl86} S.~P.~Glasby, 2-groups with every automorphism central, \textit{J. Austral. Math.Soc.}, \textbf{41(A)} (1986), 233--236.
	
	\bibitem{ph} P. Hall, A Characteristic Property of Soluble Groups, \textit{J. London Math. Soc.}, \textbf{12(3)} (1937), 198--200.
	
	
	\bibitem{jaf1} M. H. Jafari and A. R. Jamali, On the nilpotency and solubility of the central automorphism group of a finite group, \textit{Algebra Colloq.}, \textbf{15(3)} (2008), 485--492.
	
	\bibitem{jaf2} M. H. Jafari and A. R. Jamali, On the occurrence of some finite groups in the central automorphism group of finite groups, \textit{Math. Proc. Royal Irish Acad.}, \textbf{106A(2)} (2006), 139--148.
	
	\bibitem{jam} A. R. Jamali and H. Mousavi, On the central automorphism groups of finite $p$-groups, \textit{Algebra Colloq.}, \textbf{9(1)} (2002), 7--14.
	
	
	\bibitem{rv} R. Lal and V. Kakkar, Automorphisms of Zappa-Sz\'{e}p product, Advances in group theory and applications (to appear)
	
	\bibitem{mal} J.~J.~Malone, $p$-groups with non-abelian automorphism groups and all auto-morphisms central, \textit{Bull. Austral. Math. Soc.}, \textbf{29} (1984), 35--37.
	
	\bibitem{censd}  H. Mousavi and A. Shomali, Central automorphisms of semidirect products, \textit{Bull. Malays. Math. Sci. Soc}, \textbf{36(3)} (2013), 709--716.
	
	
	
	\bibitem{sz2} J. Sz\'{e}p, \"{U}ber die als Produkt zweier Untergruppen darstellbaren endlichen Gruppen, \textit{Comment. Math. Helv.}, \textbf{22} (1949), 31--33.
	\bibitem{sz3} J. Sz\'{e}p, On the structure of groups which can be represented as the product of two subgroups, \textit{Acta Sci. Math. (Szeged)}, \textbf{12} (1950), 57--61.
	\bibitem{sz1} J. Sz\'{e}p, Zur Theorie der faktorisierbaren Gruppen, \textit{Acta Sci. Math. (Szeged)}, \textbf{16} (1955), 54--57.
	\bibitem{sz4} J. Sz\'{e}p and G. Zappa, Sui gruppi trifattorizzabili, \textit{Atti Accad. Naz. Lincei Rend. Cl. Sci. Fis. Mat. Nat.}, \textbf{45(8)} (1968), 113--116.
	\bibitem{gz}  G. Zappa, Sulla costruzione dei gruppi prodotto di due dati sottogruppi permutabili tra loro, \textit{Atti Secondo Congresso Un. Mat. Ital. Bologna}, 1940, Edizioni Cremonense (Rome 1942), 119--125.
	
\end{thebibliography}
\end{document}